\theoremstyle{plain}
\newtheorem{theorem}{Theorem}
\newtheorem{corollary}{Corollary}
\theoremstyle{definition}
\newtheorem{remark}{Remark}
\numberwithin{equation}{section}
\numberwithin{theorem}{section}
\numberwithin{corollary}{section}
\numberwithin{definition}{section}
\numberwithin{remark}{section}
\numberwithin{lemma}{section}
\begin{document}

\title[Positive Green's functions for fractional bvps] {Positive Green's functions for some fractional-order boundary value problems}
\author{Douglas R. Anderson} 
\address{Department of Mathematics \\
         Concordia College \\
         Moorhead, MN 56562 USA}
\email{andersod@cord.edu}

\keywords{conformable fractional derivative, local fractional derivative, boundary value problem, positivity, Green's function, Cauchy function}
\subjclass[2010]{26A33}

\begin{abstract}
We use the newly introduced conformable fractional derivative, which is different from the Caputo and Riemann-Liouville fractional derivatives, to reformulate several common boundary value problems, including those with conjugate, right-focal, and Lidstone conditions. With the fractional differential equation and fractional boundary conditions established, we find the corresponding Green's functions and prove their positivity under appropriate assumptions. 
\end{abstract}

\maketitle\thispagestyle{empty}

\section{Introduction}

The search for the existence of positive solutions and multiple positive solutions to nonlinear fractional boundary value problems has expanded greatly over the past decade; for some recent examples please see [1-7,9,12-16]. In all of these works and the references cited therein, however, the definition of the fractional derivative used is either the Caputo or the Riemann-Liouville fractional derivative, involving an integral expression and the gamma function. Recently \cite{udita,hammad,khalil} a new definition has been formulated and dubbed the conformable fractional derivative. In this paper, we use this fractional derivative of order $\alpha$, given by
\begin{equation}\label{derivdef}
 D^{\alpha}f(t):=\lim_{\varepsilon\rightarrow 0}\frac{f(te^{\varepsilon t^{-\alpha}})-f(t)}{\varepsilon}, \quad D^{\alpha}f(0)=\lim_{t\rightarrow 0^+}D^{\alpha}f(t);  
\end{equation}
note that if $f$ is differentiable, then
\begin{equation}\label{fracshort}
 D^{\alpha}f(t) = t^{1-\alpha} f'(t), 
\end{equation}
where $f'(t)=\lim_{\varepsilon\rightarrow 0}[f(t+\varepsilon)-f(t)]/\varepsilon$.
Using this new definition of the fractional derivative, we reformulate several common boundary value problems, including those with conjugate, right-focal, and Lidstone conditions. With the fractional differential equation and fractional boundary conditions established, we find the corresponding Green's functions and prove their positivity under appropriate assumptions. This work thus sets the stage for using fixed point theorems to prove the existence of positive and multiple positive solutions to nonlinear fractional problems based on the conformable fractional derivative and these boundary value problems, as the kernel of the integral operator is often Green's function.

\section{Two Iterated Fractional Derivatives}

We begin by considering two iterated fractional derivatives in the differential operator, together with two-point boundary conditions, as illustrated in the nonlinear boundary value problem 
	\begin{gather}
	-D^{\beta}D^{\alpha} x(t) = f(t,x(t)), \qquad 0 \le t \le 1, \label{2eigeneqn} \\
	\gamma x(0)-\delta D^\alpha x(0) = 0 = \eta x(1)+\zeta D^{\alpha} x(1), \label{2eigenbc}
	\end{gather}
where	$\alpha,\beta\in(0,1]$ and the derivatives are conformable fractional derivatives \eqref{derivdef}, with $\gamma,\delta,\eta,\zeta\ge 0$ and  $d:=\eta\delta+\gamma\zeta+\gamma\eta/\alpha>0$. To verify the existence of positive solutions to this problem, one often finds the corresponding Green's function, shows that it is non-negative, and then uses a fixed point theorem on an integral operator whose kernel is Green's function. Throughout this and subsequent sections, we will state the fractional boundary value problem, find Green's function for it, and prove that it is positive except at various boundary points.


\begin{theorem}
Let $\alpha,\beta\in(0,1]$. The corresponding Green's function for the homogeneous problem 
\[ -D^{\beta}D^{\alpha} x(t)=0 \] 
satisfying boundary conditions \eqref{2eigenbc} is given by 
\begin{equation}\label{gf2}
	G(t,s)=
			\begin{cases}
				\frac{1}{d}\left[\delta+\frac{\gamma}{\alpha} t^\alpha\right]\left[\zeta+\frac{\eta}{\alpha}\left(1-s^\alpha\right)\right] &:t\le s \\
				\frac{1}{d}\left[\delta+\frac{\gamma}{\alpha} s^\alpha\right]\left[\zeta+\frac{\eta}{\alpha}\left(1-t^\alpha\right)\right] &:s\le t
			\end{cases}
\end{equation}
where we assume with $\gamma,\delta,\eta,\zeta\ge 0$ and $d=\eta\delta+\gamma\zeta+\gamma\eta/\alpha>0$.
\end{theorem}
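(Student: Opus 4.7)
The plan is to construct $G$ by the standard ``two boundary solutions'' method, then read off the claimed formula. First, I would determine the general form of solutions to the homogeneous equation $D^\beta D^\alpha x = 0$. Setting $y = D^\alpha x$ and using \eqref{fracshort}, the equation $D^\beta y = 0$ reduces to $t^{1-\beta} y'(t) = 0$, so $y \equiv c_2$; then $t^{1-\alpha} x'(t) = c_2$ integrates to $x(t) = c_1 + c_2 t^\alpha/\alpha$. Thus the homogeneous solution space is two-dimensional, spanned by $1$ and $t^\alpha/\alpha$, with $D^\alpha$ sending this space onto the scalars.

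Next, I would choose two distinguished homogeneous solutions tailored to each endpoint. Put $u(t) := \delta + (\gamma/\alpha) t^\alpha$ and $v(t) := \zeta + (\eta/\alpha)(1 - t^\alpha)$; a short calculation with \eqref{fracshort} gives $D^\alpha u \equiv \gamma$ and $D^\alpha v \equiv -\eta$, from which $\gamma u(0) - \delta D^\alpha u(0) = 0$ and $\eta v(1) + \zeta D^\alpha v(1) = 0$. Hence $u$ meets the left boundary condition in \eqref{2eigenbc} and $v$ meets the right one.

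I would then propose the piecewise ansatz $G(t,s) = C_1(s) u(t)$ for $t \le s$ and $G(t,s) = C_2(s) v(t)$ for $s \le t$, so that both boundary conditions are inherited automatically. The coefficients $C_1, C_2$ are pinned down by (i) continuity of $G(\cdot, s)$ at $t = s$, giving $C_1(s) u(s) = C_2(s) v(s)$, and (ii) a unit jump condition $-\eta C_2(s) - \gamma C_1(s) = -1$ in $D^\alpha G$ at $t = s$, which encodes the Dirac source needed for $G$ to serve as a Green's function. Solving this $2 \times 2$ linear system together with the defining identity $d = \eta\delta + \gamma\zeta + \gamma\eta/\alpha$ yields $C_1(s) = v(s)/d$ and $C_2(s) = u(s)/d$; substituting these back into the ansatz reproduces the two branches of \eqref{gf2}.

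The main obstacle is justifying the jump condition and confirming the normalization $d$. The cleanest route is to verify directly that for arbitrary continuous $f$, the candidate $x(t) := \int_0^1 G(t,s)\, s^{\beta-1} f(s)\, ds$ solves $-D^\beta D^\alpha x = f$ together with \eqref{2eigenbc}; integrating the equation twice (once to get $D^\alpha x(t) = D^\alpha x(0) - \int_0^t s^{\beta-1} f(s)\, ds$, then once more for $x$), enforcing the two boundary conditions to eliminate $x(0)$ and $D^\alpha x(0)$, and splitting the resulting double integral at $t = s$ reduces the required match to the single algebraic identity $u(t) v(s) - u(s) v(t) = (d/\alpha)(t^\alpha - s^\alpha)$, which unfolds in a few lines using the definition of $d$ and completes the argument.
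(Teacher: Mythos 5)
Your proposal is correct, but it reaches \eqref{gf2} by a genuinely different route than the paper. The paper's proof is a pure verification: it takes the candidate $x(t)=\int_0^1 G(t,s)h(s)s^{\beta-1}\,ds$, splits it along the two branches of \eqref{gf2}, applies $D^\alpha$ and then $D^\beta$ using \eqref{fracshort}, reads off the boundary conditions \eqref{2eigenbc} from the explicit formulas for $x$ and $D^\alpha x$, and watches the terms produced by differentiating the variable limits combine into $-\frac{1}{d}\left[\eta\delta+\gamma\zeta+\frac{\gamma\eta}{\alpha}\right]h(t)=-h(t)$. You instead \emph{derive} the kernel: you identify the solution space of $D^\beta D^\alpha x=0$ as the span of $1$ and $t^\alpha/\alpha$, choose the boundary solutions $u(t)=\delta+\frac{\gamma}{\alpha}t^\alpha$ and $v(t)=\zeta+\frac{\eta}{\alpha}(1-t^\alpha)$, and pin down $C_1,C_2$ by continuity at $t=s$ together with the jump condition $\eta C_2(s)+\gamma C_1(s)=1$ in $D^\alpha G$ --- which is exactly the normalization the paper's final computation confirms, since $\eta u(s)+\gamma v(s)=\eta\delta+\gamma\zeta+\gamma\eta/\alpha=d$. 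Your closing verification (integrate the equation twice, eliminate $x(0)$ and $D^\alpha x(0)$ via the boundary conditions, and match the split integral using $u(t)v(s)-u(s)v(t)=\frac{d}{\alpha}\left(t^\alpha-s^\alpha\right)$, an identity that does check out) is essentially variation of parameters and is dual to the paper's differentiation of the candidate. What your approach buys is an explanation of where \eqref{gf2} and the constant $d$ come from and why $d>0$ is the right nondegeneracy condition; what the paper's buys is brevity, since it never needs to justify the jump condition abstractly but simply lets the correct constant emerge from the product rule.
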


\begin{proof}
Let $h$ be any continuous function. We will show that
\[ x(t)=\int_{0}^{1}G(t,s)h(s)s^{\beta-1}ds, \]
for $G$ given by \eqref{gf2}, is a solution to the linear boundary value problem
\[ -D^{\beta}D^{\alpha} x(t) = h(t) \]
with boundary conditions \eqref{2eigenbc}. 

For any $t\in[0,1]$, using the branches of \eqref{gf2} we have
\begin{eqnarray*} 
 x(t) &=& \frac{1}{d}\left[\zeta+\frac{\eta}{\alpha}\left(1-t^\alpha\right)\right] \int_{0}^{t} \left[\delta+\frac{\gamma}{\alpha} s^\alpha\right] h(s)s^{\beta-1}ds \\
  & & + \frac{1}{d}\left[\delta+\frac{\gamma}{\alpha} t^\alpha\right] \int_{t}^{1} \left[\zeta+\frac{\eta}{\alpha}\left(1-s^\alpha\right)\right]h(s)s^{\beta-1}ds.
\end{eqnarray*}
Taking the $\alpha$-fractional derivative yields
\begin{eqnarray*} 
 D^{\alpha}x(t) &=& -\frac{\eta}{d} \int_{0}^{t} \left[\delta+\frac{\gamma}{\alpha} s^\alpha\right] h(s)s^{\beta-1}ds 
   + \frac{\gamma}{d} \int_{t}^{1} \left[\zeta+\frac{\eta}{\alpha}\left(1-s^\alpha\right)\right]h(s)s^{\beta-1}ds.
\end{eqnarray*}
Checking the first boundary condition, we see that
\[ \gamma x(0)-\delta D^\alpha x(0) = 0. \]
Moreover, in checking the second boundary condition we get
\[ \eta x(1)+\zeta D^{\alpha} x(1) = 0. \]
Taking the $\beta$-fractional derivative of the $\alpha$-fractional derivative yields
\begin{eqnarray*} 
 D^{\beta}D^{\alpha}x(t) &=& -\frac{\eta}{d} \left[\delta+\frac{\gamma}{\alpha} t^\alpha\right] h(t)t^{\beta-1}t^{1-\beta} 
   - \frac{\gamma}{d} \left[\zeta+\frac{\eta}{\alpha}\left(1-t^\alpha\right)\right]h(t)t^{\beta-1}t^{1-\beta} \\
   &=& -\frac{1}{d}h(t)\left[ \eta\delta  + \gamma\zeta+\frac{\gamma\eta}{\alpha}\right] = -h(t),
\end{eqnarray*}
which is what we set out to prove.
\end{proof}


\begin{corollary}[Fractional Conjugate and Right-Focal Problems]
The corresponding Green's function for the homogeneous problem 
\[ -D^{\beta}D^{\alpha} x(t)=0 \] 
satisfying the conjugate boundary conditions $x(0)=x(1)=0$ is given by 
\begin{equation}\label{gfconj}
	G(t,s)=
			\begin{cases}
				\frac{1}{\alpha} t^\alpha\left(1-s^\alpha\right) &:t\le s, \\
				\frac{1}{\alpha} s^\alpha\left(1-t^\alpha\right) &:s\le t,
			\end{cases}
\end{equation}
and the corresponding Green's function for the homogeneous problem 
\[ -D^{\beta}D^{\alpha} x(t)=0 \] 
satisfying the right-focal-type boundary conditions $x(0)=D^{\alpha}x(1)=0$ is given by 
\begin{equation}\label{gfrtfoc}
	G(t,s)=
			\begin{cases}
				\frac{1}{\alpha} t^\alpha &:t\le s, \\
				\frac{1}{\alpha} s^\alpha &:s\le t.
			\end{cases}
\end{equation}
\end{corollary}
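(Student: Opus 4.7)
The plan is to derive both Green's functions as immediate specializations of the theorem by choosing appropriate values of the parameters $\gamma, \delta, \eta, \zeta$ that encode each set of boundary conditions, and then simplifying the general formula \eqref{gf2}.

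First I would handle the conjugate case $x(0)=x(1)=0$. The boundary conditions \eqref{2eigenbc} reduce to $x(0)=0$ and $x(1)=0$ precisely when the coefficients on $D^\alpha x$ vanish, i.e.\ by taking $\gamma=\eta=1$ and $\delta=\zeta=0$. I would then check that the admissibility condition $d>0$ holds: with these values, $d=\eta\delta+\gamma\zeta+\gamma\eta/\alpha=1/\alpha>0$, so the theorem applies. Substituting directly into \eqref{gf2} and cancelling the factor of $\alpha$ from $1/d$ against the two factors of $1/\alpha$ in the bracketed expressions yields \eqref{gfconj} on both branches. No real calculation is needed beyond this simplification.

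Next I would handle the right-focal case $x(0)=D^\alpha x(1)=0$. Now the first condition forces $\gamma=1, \delta=0$ as before, while the second condition $D^\alpha x(1)=0$ corresponds to $\eta=0, \zeta=1$. Again $d=\eta\delta+\gamma\zeta+\gamma\eta/\alpha=1>0$, so the theorem applies. Substituting into \eqref{gf2}, the factor $\zeta+(\eta/\alpha)(1-s^\alpha)$ on the top branch collapses to $1$, and similarly the factor with $1-t^\alpha$ on the bottom branch collapses to $1$; meanwhile $\delta+(\gamma/\alpha)t^\alpha=t^\alpha/\alpha$ and $\delta+(\gamma/\alpha)s^\alpha=s^\alpha/\alpha$. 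This yields \eqref{gfrtfoc}.

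There is no genuine obstacle here: both statements are corollaries in the literal sense, obtained by parameter substitution. The only point requiring mild care is confirming that the chosen parameters satisfy $d>0$ so that the hypotheses of the theorem are met, and performing the algebraic cancellations cleanly. If desired, one could alternatively verify each Green's function directly from scratch by integrating $-D^\beta D^\alpha x=h$ twice (using $D^\alpha f = t^{1-\alpha}f'$ from \eqref{fracshort}) and applying the prescribed boundary conditions, but specialization of the theorem is the shorter route.
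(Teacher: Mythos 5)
Your proposal is correct and is exactly the intended argument: the paper states this as a corollary with no written proof, and the specializations $\gamma=\eta=1$, $\delta=\zeta=0$ (conjugate) and $\gamma=\zeta=1$, $\delta=\eta=0$ (right-focal) substituted into \eqref{gf2}, with the checks $d=1/\alpha>0$ and $d=1>0$ respectively, give precisely \eqref{gfconj} and \eqref{gfrtfoc}.
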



\begin{remark}
Note that the fractional Green's function given above for the conjugate boundary conditions in \eqref{gfconj} differs from that found for example in Bai and L\"{u} \cite{bai}.
\end{remark}


\begin{theorem}
For $G(t,s)$ given in \eqref{gf2}, we have that
\begin{equation}\label{Gbds2} 
  g(t)G(s,s) < G(t,s) \le G(s,s)
\end{equation}
for $t,s\in[0,1]$, where
\begin{equation}\label{g2}
  g(t):=\min\left\{\frac{\alpha\delta+\gamma  t^\alpha}{\alpha\delta+\gamma},\frac{\alpha\zeta+\eta\left(1-t^{\alpha}\right)}{\alpha\zeta+\eta}\right\}.
\end{equation}
\end{theorem}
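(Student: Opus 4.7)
The plan is to split into the two branches $t\le s$ and $s\le t$ of the piecewise definition \eqref{gf2}, form the ratio $G(t,s)/G(s,s)$ in each case, and observe that one factor cancels cleanly. Specifically, for $t\le s$ the factor $\zeta+\tfrac{\eta}{\alpha}(1-s^\alpha)$ is common to $G(t,s)$ and $G(s,s)$, so after cancelling and multiplying numerator and denominator by $\alpha$ one is left with
\[ \frac{G(t,s)}{G(s,s)} \;=\; \frac{\alpha\delta+\gamma t^\alpha}{\alpha\delta+\gamma s^\alpha}. \]
The symmetric case $s\le t$ yields
\[ \frac{G(t,s)}{G(s,s)} \;=\; \frac{\alpha\zeta+\eta(1-t^\alpha)}{\alpha\zeta+\eta(1-s^\alpha)}. \]

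The upper bound $G(t,s)\le G(s,s)$ is then immediate in each case from monotonicity of $r\mapsto r^\alpha$ on $[0,1]$: in the first case $t^\alpha\le s^\alpha$ forces the ratio below $1$, and in the second case $1-t^\alpha\le 1-s^\alpha$ does the same. For the lower bound, in the branch $t\le s$ I would enlarge the denominator by replacing $s^\alpha$ with its maximum value $1$, giving
\[ \frac{G(t,s)}{G(s,s)} \;\ge\; \frac{\alpha\delta+\gamma t^\alpha}{\alpha\delta+\gamma}, \]
which is exactly the first argument of the minimum defining $g(t)$ and hence is $\ge g(t)$. The branch $s\le t$ is handled identically, replacing $1-s^\alpha$ by $1$ to produce the second argument of the minimum in \eqref{g2}.

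The only delicate point is the strictness of the lower inequality. Replacing $s^\alpha$ by $1$ (respectively $1-s^\alpha$ by $1$) is strict when $s<1$ (respectively $s>0$), and the minimum in $g(t)$ gives strict inequality against the non-chosen argument. One must therefore check the extreme points $s\in\{0,1\}$ separately, using the standing hypothesis $d=\eta\delta+\gamma\zeta+\gamma\eta/\alpha>0$ together with the non-negativity of $\gamma,\delta,\eta,\zeta$ to ensure that at least one of the two bounding steps is strict. This case-check at the boundary of the square $[0,1]^2$ is the main, though mild, obstacle; the interior of the square is routine.
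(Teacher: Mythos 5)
Your argument is essentially identical to the paper's proof, which records the same two-branch ratio $G(t,s)/G(s,s)$ and simply asserts that it ``yields both inequalities''; the cancellation of the common factor, the monotonicity of $r\mapsto r^\alpha$, and the enlargement of the denominator ($s^\alpha\mapsto 1$, resp.\ $1-s^\alpha\mapsto 1$) are exactly the steps the paper leaves implicit. The strictness worry you flag is genuine and the paper does not address it at all: for instance if $\delta=0$ then $G(0,0)=0$ and the left inequality in \eqref{Gbds2} degenerates to equality at $t=s=0$ (similarly for $\zeta=0$ at $t=s=1$, or for $\gamma=0$, where $G(t,s)=G(s,s)$ for all $t\le s$ while $g(0)=1$), so the boundary case-check you propose would actually reveal that the strict inequality must be weakened to $\le$ at such points rather than merely complete the proof as stated.
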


\begin{proof}
It is straightforward to see that
$$ \frac{G(t,s)}{G(s,s)}=\begin{cases}
     \displaystyle\frac{\delta + \frac{\gamma}{\alpha}t^\alpha}{\delta + \frac{\gamma}{\alpha}s^\alpha}, &  t\le s, \\
       & \\
     \displaystyle\frac{\zeta+\frac{\eta}{\alpha}(1-t^\alpha)}{\zeta+\frac{\eta}{\alpha}(1-s^\alpha)}, & s\le t;
  \end{cases} $$
this expression yields both inequalities in \eqref{Gbds2} for $g$ as in \eqref{g2}.
\end{proof}


\begin{remark}
It is also common to skip the Green's function representation and to express solutions directly. For instance, let $\alpha,\beta\in(0,1]$. The motivated reader can verify that the fractional boundary value problem 
\[ -D^{\beta}D^{\alpha} x(t)=h(t), \quad t\in(0,1), \] 
satisfying the three-point boundary conditions 
\[ x(0)=0, \quad \delta x(\eta)=x(1), \]
where $h$ is a continuous function, $\eta\in(0,1)$ and $0<\delta \eta^\alpha<1$, is given by 
\begin{eqnarray*} 
  x(t) &=& \frac{-1}{\alpha}\int_0^t \left(t^\alpha-s^\alpha\right)h(s)s^{\beta-1}ds  - \frac{\delta  t^\alpha}{\alpha\left(1-\delta  \eta^\alpha\right)}\int_0^\eta \left(\eta^\alpha-s^\alpha\right)h(s)s^{\beta-1}ds \\
       & & + \frac{t^\alpha}{\alpha\left(1-\delta  \eta^\alpha\right)}\int_0^1 \left(1-s^\alpha\right)h(s)s^{\beta-1}ds. 
\end{eqnarray*}
\end{remark}

\section{Three Iterated Fractional Derivatives}

Next we consider the three iterated fractional derivative nonlinear right-focal problem
	\begin{gather}
	D^{\gamma}D^{\beta}D^{\alpha} x(t) = f(t,x(t)), \qquad 0 \le t \le 1, \label{eigeneqn} \\
	x(0) = D^{\alpha} x(\tau) = D^{\beta}D^{\alpha} x(1)=0, \label{eigenbc}
	\end{gather}
where	$\alpha,\beta,\gamma\in(0,1]$ with $0 < \tau < 1$. One could impose further the conditions $\alpha+\beta\in(1,2]$ and $\alpha+\beta+\gamma\in(2,3]$ if one wishes to explore a fractional problem of order $(2,3]$. Our approach to the existence of positive solutions would again involve Green's function for this problem. Once the following three theorems are established, an interested reader could then apply a fixed point theorem to get positive solutions to \eqref{eigeneqn}, \eqref{eigenbc}, although the details are omitted here.


\begin{theorem}[Fractional Right-Focal Problem]
Let $\alpha,\beta,\gamma\in(0,1]$ and $0 < \tau < 1$. The corresponding Green's function for the homogeneous problem 
\[ D^{\gamma}D^{\beta}D^{\alpha} x(t)=0 \] 
satisfying boundary conditions \eqref{eigenbc} is given by 
\begin{equation}\label{greensfunction}
	G(t,s)=
	\begin{cases}
    	s\in [0,\tau] &:
			\begin{cases}
				u(t,s)	 &:t\le s \\
        x(0,s) &:s\le t
			\end{cases}\\
			&  \\
      s\in [\tau,1] &:
			\begin{cases}
				u(t,\tau)	      &:t\le s \\
				u(t,\tau)+x(t,s) &:s\le t
			\end{cases} \\
	\end{cases}
\end{equation}
where
\begin{equation}\label{udef}
  u(t,s) = \frac{(\alpha+\beta)t^{\alpha}s^{\beta}-\alpha  t^{\alpha+\beta}}{\alpha\beta(\alpha+\beta)} 
\end{equation}
and $x(\cdot,\cdot)$ is the Cauchy function given
\begin{equation}\label{xdef} 
  x(t,s) = \frac{\alpha t^{\alpha}\left(t^{\beta}-s^{\beta}\right)+\beta s^{\beta}\left(s^{\alpha}-t^{\alpha}\right)}{\alpha\beta(\alpha+\beta)}.
\end{equation}
\end{theorem}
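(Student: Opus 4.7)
The plan is to mimic the direct-verification template from the proof of the earlier two-derivative theorem. Let $h$ be an arbitrary continuous function on $[0,1]$ and set
\[ x(t) = \int_0^1 G(t,s) h(s) s^{\gamma-1}\,ds, \]
with $G$ as in \eqref{greensfunction}. The goal is to show that this $x$ solves the non-homogeneous equation $D^{\gamma}D^{\beta}D^{\alpha}x(t) = h(t)$ together with the boundary conditions \eqref{eigenbc}; that is enough to identify $G$ as the Green's function for the homogeneous problem. The first move is to split the integral according to the four branches of \eqref{greensfunction}: for $t \in [0,\tau]$ one obtains
\[ x(t) = \int_0^t x(0,s) h(s) s^{\gamma-1}\,ds + \int_t^\tau u(t,s) h(s) s^{\gamma-1}\,ds + u(t,\tau)\int_\tau^1 h(s) s^{\gamma-1}\,ds, \]
while for $t \in [\tau,1]$ one has
\[ x(t) = \int_0^\tau x(0,s) h(s) s^{\gamma-1}\,ds + u(t,\tau)\int_\tau^1 h(s) s^{\gamma-1}\,ds + \int_\tau^t x(t,s) h(s) s^{\gamma-1}\,ds. \]
The condition $x(0)=0$ is immediate since $u(0,\cdot)\equiv 0$ and the first integral is empty at $t=0$.

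Next I would compute $D^{\alpha}x(t) = t^{1-\alpha} x'(t)$ piecewise. The algebraic identities that make the computation collapse are
\[ u(t,t) = x(0,t) = \frac{t^{\alpha+\beta}}{\alpha(\alpha+\beta)}, \qquad x(t,t) = 0, \qquad u(t,\tau)+x(t,\tau) = x(0,\tau), \]
together with the partial derivatives $\partial_t u(t,s) = t^{\alpha-1}(s^\beta - t^\beta)/\beta$ and $\partial_t x(t,s) = t^{\alpha-1}(t^\beta - s^\beta)/\beta$. The first two equations kill the variable-limit boundary terms coming from Leibniz's rule, and the third is continuity of $G$ across the interior knot $s=\tau$. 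After these cancellations, both pieces of $D^{\alpha}x$ reduce to a single closed form, which vanishes at $t=\tau$ by inspection, giving $D^{\alpha}x(\tau)=0$.

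Iterating the procedure, a second fractional differentiation then yields the clean formula
\[ D^{\beta}D^{\alpha}x(t) = -\int_t^1 h(s) s^{\gamma-1}\,ds, \]
valid uniformly on $[0,1]$; boundary terms again cancel and the two piecewise cases agree across $t=\tau$. From this $D^{\beta}D^{\alpha}x(1)=0$ is immediate, and a single further $\gamma$-derivative gives $D^{\gamma}D^{\beta}D^{\alpha}x(t) = t^{1-\gamma}\cdot h(t) t^{\gamma-1} = h(t)$, finishing the verification.

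The main obstacle is bookkeeping rather than any deep idea: with four branches in $G$ and two successive fractional derivatives to take, each variable integration limit spawns a boundary term that must be shown to cancel. The genuine content sits in the handful of identities for $u$ and $x$ collected above — continuity across the diagonal $s=t$, continuity at the interior knot $s=\tau$, and the two partial derivatives — after which the chain $D^{\alpha} \to D^{\beta}D^{\alpha} \to D^{\gamma}D^{\beta}D^{\alpha}$ telescopes automatically and delivers both the equation and all three boundary conditions.
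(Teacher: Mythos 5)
Your proposal is correct and follows essentially the same route as the paper: split the integral over the four branches, verify $x(0)=0$, and compute $D^{\alpha}$, then $D^{\beta}D^{\alpha}$, then $D^{\gamma}D^{\beta}D^{\alpha}$, using the continuity identities $u(t,t)=x(0,t)$, $x(t,t)=0$, $u(t,\tau)+x(t,\tau)=x(0,\tau)$ to cancel the Leibniz boundary terms. Your formula $D^{\beta}D^{\alpha}x(t)=-\int_t^1 h(s)s^{\gamma-1}\,ds$ agrees with the paper's $\int_1^t h(s)s^{\gamma-1}\,ds$, and all intermediate expressions match those in the paper's proof.
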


\begin{proof}
Let $h$ be any continuous function. We will show that
\[ x(t)=\int_{0}^{1}G(t,s)h(s)s^{\gamma-1}ds, \]
for $G$ given by \eqref{greensfunction}, is a solution to the linear boundary value problem
\[ D^{\gamma}D^{\beta}D^{\alpha} x(t)=h(t) \]
with boundary conditions \eqref{eigenbc}. 

First let $t\in[0,\tau]$. Then
\begin{equation}\label{xcase1} 
  x(t) = \int_{0}^{t} x(0,s)h(s)s^{\gamma-1}ds + \int_{t}^{\tau} u(t,s)h(s)s^{\gamma-1}ds + \int_{\tau}^{1} u(t,\tau)h(s)s^{\gamma-1}ds;
\end{equation}
clearly $x(0)=0$ by \eqref{udef}. Differentiating \eqref{xcase1}, we have
\begin{eqnarray*}
 D^{\alpha}x(t) &=& x(0,t)h(t)t^{\gamma-1}t^{1-\alpha} + \int_{t}^{\tau} \left(\frac{s^\beta-t^\beta}{\beta}\right)h(s)s^{\gamma-1}ds \\
 & & - u(t,t)h(t)t^{\gamma-1}t^{1-\alpha} + \int_{\tau}^{1} \left(\frac{\tau^\beta-t^\beta}{\beta}\right)h(s)s^{\gamma-1}ds \\
 &=& \frac{1}{\beta}\int_{t}^{\tau} \left(s^\beta-t^\beta\right)h(s)s^{\gamma-1}ds + \frac{1}{\beta}\int_{\tau}^{1} \left(\tau^\beta-t^\beta\right)h(s)s^{\gamma-1}ds.
\end{eqnarray*}
Clearly the second boundary condition $D^{\alpha}x(\tau)=0$ is met. Differentiating again, we have
\begin{eqnarray*}
 D^{\beta}D^{\alpha}x(t) &=& \frac{1}{\beta}\int_{t}^{\tau} \left(-\beta t^{\beta-1}t^{1-\beta}\right)h(s)s^{\gamma-1}ds + \frac{1}{\beta}\int_{\tau}^{1} \left(-\beta t^{\beta-1}t^{1-\beta}\right)h(s)s^{\gamma-1}ds \\
 &=& \int_{1}^{t} h(s)s^{\gamma-1}ds.
\end{eqnarray*}
It follows that $D^{\beta}D^{\alpha} x(1)=0$ and $D^{\gamma}D^{\beta}D^{\alpha} x(t)=h(t)$, proving the claim in this case.

Next let $t\in[\tau,1]$. Then
\begin{equation}\label{xcase2} 
  x(t) = \int_{0}^{\tau} x(0,s)h(s)s^{\gamma-1}ds + \int_{\tau}^{t} x(t,s)h(s)s^{\gamma-1}ds + u(t,\tau)\int_{\tau}^{1} h(s)s^{\gamma-1}ds;
\end{equation}
again $x(0)=0$ by \eqref{udef}. Differentiating \eqref{xcase2}, we have
\begin{eqnarray*}
 D^{\alpha}x(t) &=& \frac{1}{\beta}\int_{\tau}^{t} \left(t^\beta-s^\beta\right)h(s)s^{\gamma-1}ds + \left(\frac{\tau^\beta-t^\beta}{\beta}\right)\int_{\tau}^{1} h(s)s^{\gamma-1}ds.
\end{eqnarray*}
The second boundary condition $D^{\alpha}x(\tau)=0$ is clearly met. Differentiating again, we have
\begin{eqnarray*}
 D^{\beta}D^{\alpha}x(t) &=& \int_{1}^{t} h(s)s^{\gamma-1}ds;
\end{eqnarray*}
as in the previous case, $D^{\beta}D^{\alpha} x(1)=0$ and $D^{\gamma}D^{\beta}D^{\alpha} x(t)=h(t)$, proving the claim in this case as well.
\end{proof}


\begin{theorem}
For $G(t,s)$ given in \eqref{greensfunction}, we have that
\begin{equation}\label{Gbds} 
  0 < G(t,s) \le G(\tau,s)
\end{equation}
for $t\in(0,1]$ and $s\in(0,1]$, provided the condition $u(1,\tau)>0$ is met for $u$ in \eqref{udef}, that is to say the inequality
\begin{equation}\label{boundarydistance}
  \tau > \left(\frac{\alpha}{\alpha+\beta}\right)^{1/\beta}
\end{equation}
holds. 
\end{theorem}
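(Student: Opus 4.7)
The plan is to check both inequalities in \eqref{Gbds} piecewise, matching the four subregions of $[0,1]^2$ described by the definition \eqref{greensfunction}. First I would record the benchmark values $G(\tau,s)=x(0,s)=s^{\alpha+\beta}/[\alpha(\alpha+\beta)]$ for $s\in[0,\tau]$ and $G(\tau,s)=u(\tau,\tau)=\tau^{\alpha+\beta}/[\alpha(\alpha+\beta)]$ for $s\in[\tau,1]$; these are what the upper bound must meet. A useful sanity check is that $u(s,s)=x(0,s)$, so the two branches agree at the diagonal.

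For the upper bound, the main tool is the classical partial derivative
\[ \partial_t u(t,s)=\frac{t^{\alpha-1}(s^\beta-t^\beta)}{\beta}, \]
which shows that $u(\cdot,s)$ is nondecreasing on $[0,s]$ and nonincreasing on $[s,1]$; in particular $u(\cdot,\tau)$ attains its maximum at $t=\tau$. This disposes of three of the four subregions immediately: if $s\in[0,\tau]$ and $t\le s$ then $G(t,s)=u(t,s)\le u(s,s)=x(0,s)=G(\tau,s)$; if $s\in[0,\tau]$ and $t\ge s$ then $G(t,s)=x(0,s)=G(\tau,s)$ with equality; and if $s\in[\tau,1]$ with $t\le s$ then $G(t,s)=u(t,\tau)\le u(\tau,\tau)=G(\tau,s)$. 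The only genuinely nontrivial case is $s\in[\tau,1]$ with $t\ge s$, where $G(t,s)=u(t,\tau)+x(t,s)$. Setting $\psi(t):=u(t,\tau)+x(t,s)$ and computing $\partial_t x(t,s)=t^{\alpha-1}(t^\beta-s^\beta)/\beta$, the derivatives telescope to $\psi'(t)=t^{\alpha-1}(\tau^\beta-s^\beta)/\beta\le 0$ because $s\ge\tau$. Hence $\psi$ decreases on $[s,1]$ and $\psi(t)\le\psi(s)=u(s,\tau)\le u(\tau,\tau)=G(\tau,s)$.

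For the lower bound $G(t,s)>0$ on $(0,1]\times(0,1]$, I would treat the four pieces in turn. Writing $u(t,s)=t^\alpha[(\alpha+\beta)s^\beta-\alpha t^\beta]/[\alpha\beta(\alpha+\beta)]$, positivity on the region $t\le s\le\tau$ is clear since $\alpha t^\beta\le\alpha s^\beta<(\alpha+\beta)s^\beta$. On the region $s\in[0,\tau]$ with $t\ge s$, one has $G=x(0,s)>0$ outright. On the region $s\in[\tau,1]$ with $t\le s$, positivity of $u(t,\tau)$ amounts to $(\alpha+\beta)\tau^\beta>\alpha t^\beta$; since $t\le 1$ this is exactly the standing hypothesis $u(1,\tau)>0$, i.e.\ \eqref{boundarydistance}. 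Finally on the region $s\in[\tau,1]$ with $t\ge s$, the same argument gives $u(t,\tau)>0$, and $x(t,s)\ge 0$ follows from the weighted AM--GM inequality $\alpha t^{\alpha+\beta}+\beta s^{\alpha+\beta}\ge(\alpha+\beta)t^\alpha s^\beta$; adding yields strict positivity.

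The principal obstacle is the subcase $s\in[\tau,1]$, $t\ge s$, where $G$ is a sum of two $t$-dependent terms whose individual monotonicities in $t$ point in opposite directions. The saving grace is the clean cancellation $\partial_t u(t,\tau)+\partial_t x(t,s)=t^{\alpha-1}(\tau^\beta-s^\beta)/\beta$, whose sign is governed solely by the geometric relation $s\ge\tau$. Throughout, the hypothesis \eqref{boundarydistance} plays two roles: it keeps $u(\cdot,\tau)$ strictly positive on the full interval $(0,1]$, and it is what rules out degeneration of $G$ to zero in the fourth subregion.
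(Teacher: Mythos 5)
Your proof is correct and follows essentially the same route as the paper's: both arguments rest on the derivatives $\frac{d}{dt}u(t,s)=t^{\alpha-1}(s^\beta-t^\beta)/\beta$ and $\frac{d}{dt}x(t,s)=t^{\alpha-1}(t^\beta-s^\beta)/\beta$, the telescoping cancellation on the region $\tau\le s\le t$, and the reduction of the positivity condition to $u(1,\tau)>0$, i.e.\ \eqref{boundarydistance}. The only cosmetic difference is that you establish $x(t,s)\ge 0$ by weighted AM--GM and verify positivity branch by branch, whereas the paper argues that $G(\cdot,s)$ is unimodal with peak at $t=\tau$ and checks the endpoint $t=1$; if anything, your version makes the strict inequality $G(t,s)>0$ near $t=0$ more transparent.
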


\begin{proof}
Referring to \eqref{greensfunction}, \eqref{udef}, and \eqref{xdef}, we see that 
\[ u(s,s)=x(0,s), \quad x(0,\tau)=u(t,\tau)+x(t,\tau), \]
ensuring that $G(t,s)$ is a well-defined function; we also see that $G(t,s)=0$ if $t=0$ or $s=0$. From \eqref{udef} specifically, $u(0,s)=0$  for all $s$ and
\[ \frac{d}{dt}u(t,s)=\frac{t^{\alpha-1}(s^\beta-t^\beta)}{\beta} \ge 0, \quad s\ge t. \]
Moreover, from \eqref{xdef} we have $x(s,s)=0$ and
\[ \frac{d}{dt}x(t,s)=\frac{t^{\alpha-1}(t^\beta-s^\beta)}{\beta} \ge 0, \quad t\ge s, \]
so that $G(t,s)$ is non-decreasing on $[0,\tau]$ and non-increasing on $[\tau,1]$. Thus \eqref{Gbds} will hold if $G(1,\tau)>0$ holds, which occurs if $u(1,\tau)>0$. This is equivalent to \eqref{boundarydistance}, completing the proof.
\end{proof}


\begin{theorem}
For all $t,s\in[0,1]$,
\begin{equation}\label{greenbounds}
 g(t)G(\tau,s)\le G(t,s)\le G(\tau,s)
\end{equation}
where
\begin{equation}\label{g}
   g(t):=\min\left\{\frac{t^\alpha}{\beta \tau^{\alpha+\beta}}\left[(\alpha+\beta)\tau^\beta-\alpha t^\beta\right],\frac{1-t}{1-\tau}\right\}.
\end{equation}
\end{theorem}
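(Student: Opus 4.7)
The upper bound $G(t,s)\le G(\tau,s)$ is inherited immediately from the previous theorem, which showed that $G(\cdot,s)$ is non-decreasing on $[0,\tau]$ and non-increasing on $[\tau,1]$; hence its maximum in $t$ is attained at $t=\tau$.

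For the lower bound, the plan is a case analysis based on the four branches of $G$ in \eqref{greensfunction}. I would first evaluate the denominator: for $s\in[0,\tau]$ one has $G(\tau,s)=x(0,s)=\frac{s^{\alpha+\beta}}{\alpha(\alpha+\beta)}$, and for $s\in[\tau,1]$ one has $G(\tau,s)=u(\tau,\tau)=\frac{\tau^{\alpha+\beta}}{\alpha(\alpha+\beta)}$. A short computation then identifies the first expression in the minimum defining $g(t)$ as exactly $u(t,\tau)/u(\tau,\tau)$. Since $g(t)$ is the minimum of two quantities, it suffices to show in each case that $G(t,s)/G(\tau,s)\ge u(t,\tau)/u(\tau,\tau)$; the second expression $(1-t)/(1-\tau)$ only shrinks $g(t)$ further and so requires no separate argument.

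I would handle the cases as follows. If $s\in[0,\tau]$ and $s\le t$, then $G(t,s)=x(0,s)=G(\tau,s)$, so the ratio is $1$, which dominates $u(t,\tau)/u(\tau,\tau)$ because a brief derivative check shows $u(\cdot,\tau)$ is maximized at $\tau$. If $s\in[\tau,1]$ and $t\le s$, the ratio equals $u(t,\tau)/u(\tau,\tau)$ by inspection. If $s\in[\tau,1]$ and $s\le t$, then $G(t,s)=u(t,\tau)+x(t,s)$, and the weighted AM--GM inequality $\alpha t^{\alpha+\beta}+\beta s^{\alpha+\beta}\ge(\alpha+\beta)t^\alpha s^\beta$ yields $x(t,s)\ge 0$, so again the ratio is at least $u(t,\tau)/u(\tau,\tau)$. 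The substantive case is $t\le s\le\tau$, where the ratio simplifies to $\frac{t^\alpha[(\alpha+\beta)s^\beta-\alpha t^\beta]}{\beta s^{\alpha+\beta}}$; regarding this as a function of $s$ with $t$ fixed and differentiating, the sign of the derivative matches that of $t^\beta-s^\beta\le 0$, so the expression is non-increasing on $[t,\tau]$ and is bounded below by its value at $s=\tau$, which equals $u(t,\tau)/u(\tau,\tau)$. The main obstacle I anticipate is this last case, since the derivative-in-$s$ calculation is the only nontrivial step; the other three cases reduce to direct identities or to the non-negativity of $x$.
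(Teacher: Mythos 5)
Your proof is correct, and for the region $t\ge\tau$ it takes a genuinely different (and in fact stronger) route than the paper. The paper splits the lower bound into five cases and, for $\tau\le t\le s$ and $\tau\le s\le t$, introduces the auxiliary function $w(t)=u(t,\tau)-\frac{1-t}{1-\tau}u(\tau,\tau)$ and a concavity argument (which needs \eqref{boundarydistance} to guarantee $w(1)>0$) to extract the factor $\frac{1-t}{1-\tau}$; that is the only place the second entry of the minimum in \eqref{g} is used. You instead observe that on those branches $G(t,s)/G(\tau,s)$ equals, or exceeds (via $x(t,s)\ge 0$, which your weighted AM--GM identity $\alpha t^{\alpha+\beta}+\beta s^{\alpha+\beta}\ge(\alpha+\beta)t^\alpha s^\beta$ establishes directly), the ratio $u(t,\tau)/u(\tau,\tau)$, which is exactly the first entry of the minimum and hence dominates $g(t)$; this bypasses the concavity argument entirely and shows the theorem actually holds with $g$ replaced by the single expression $\frac{t^\alpha}{\beta\tau^{\alpha+\beta}}\bigl[(\alpha+\beta)\tau^\beta-\alpha t^\beta\bigr]$, making the $\frac{1-t}{1-\tau}$ term in \eqref{g} redundant. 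In the remaining cases you essentially follow the paper, but you add detail it omits: the explicit derivative-in-$s$ computation showing $\frac{u(t,s)}{x(0,s)}$ is non-increasing in $s$ on $[t,\tau]$ (the paper asserts the inequality $\frac{u(t,s)}{x(0,s)}\ge\frac{u(t,\tau)}{x(0,\tau)}$ without justification), and the check that $u(\cdot,\tau)$ is maximized at $\tau$ so that the ratio $1$ in the case $s\le t$, $s\le\tau$ dominates $u(t,\tau)/u(\tau,\tau)$. The paper's approach buys a bound of the transparent linear form $\frac{1-t}{1-\tau}$ near $t=1$; yours buys a uniformly sharper constant, a shorter argument, and less reliance on \eqref{boundarydistance}.
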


\begin{proof}
From the preceeding theorem, we have $G(t,s)\le G(\tau,s)$ for all $t,s\in[0,1]$. For the lower bound, we proceed by cases on the branches of the Green's function \eqref{greensfunction}, that is we use \eqref{udef} and \eqref{xdef}.
\begin{enumerate}
  \item[$(i)$] $0 \le t \le s \le \tau$: Here $G(t,s)=u(t,s)$, $G(\tau,s)=x(0,s)=\frac{1}{\alpha(\alpha+\beta)}s^{\alpha+\beta}$.  For these $t,s$ we have
      \[ \frac{u(t,s)}{x(0,s)} \ge \frac{u(t,\tau)}{x(0,\tau)} \ge \frac{t^\alpha}{\beta \tau^{\alpha+\beta}}\left[(\alpha+\beta)\tau^\beta-\alpha t^\beta\right]\]
      which implies
      \[  G(t,s) \ge \frac{t^\alpha}{\beta \tau^{\alpha+\beta}}\left[(\alpha+\beta)\tau^\beta-\alpha t^\beta\right] G(\tau,s). \]
  \item[$(ii)$] $0 \le t \le \tau \le s \le 1$: In this case $G(t,s)=u(t,\tau)$ and $G(\tau,s)=u(\tau,\tau)$, so again we have
      \[  G(t,s) \ge \frac{t^\alpha}{\beta \tau^{\alpha+\beta}}\left[(\alpha+\beta)\tau^\beta-\alpha t^\beta\right] G(\tau,s). \]
  \item[$(iii)$] $0 \le s \leq t \le \tau$ or  $0 \le s \le \tau \le t \le 1$: 
       Since $G(t,s)=G(\tau,s)=\frac{1}{\alpha(\alpha+\beta)}s^{\alpha+\beta}$, it follows that
      \[  G(t,s) = G(\tau,s). \]
  \item[$(iv)$] $\tau \le t \le s \le 1$: As in $(ii)$, $G(t,s)=u(t,\tau)$ and $G(\tau,s)=u(\tau,\tau)$.  Define
	\begin{align}
	w(t)&:=u(t,\tau)-\left(\frac{1-t}{1-\tau}\right)u(\tau,\tau)\label{w}\\
	&=G(t,s)-\left(\frac{1-t}{1-\tau}\right)G(\tau,s).\nonumber
	\end{align}
      Now $w(\tau)=0$, $w^{\prime}(\tau)>0$, and $w(1)=G(1,s)>0$ by \eqref{boundarydistance}.
      Since $w$ is concave down, $w(t)\geq 0$ on $[\tau,1]$, hence
      \[ G(t,s) \ge \left(\frac{1-t}{1-\tau}\right)G(\tau,s). \]
  \item[$(v)$] $\tau \le s \le t \le 1$: Note that $G(\tau,s)=u(\tau,\tau)$, while
      $G(t,s) = u(t,\tau)+x(t,s) \ge u(t,\tau)$;
      consequently, the employment of $w$ as in \eqref{w} yields
      \[ G(t,s) \ge \left(\frac{1-t}{1-\tau}\right)G(\tau,s). \]
\end{enumerate}
\end{proof}

\section{Four Iterated Fractional Derivatives}

In this final section we consider four iterated fractional derivatives in the differential operator, with two types of boundary conditions. First, consider the nonlinear two-point cantilever beam eigenvalue problem
	\begin{gather}
	D^{\delta}D^{\gamma}D^{\beta}D^{\alpha} x(t) = \lambda a(t)f(x), \qquad 0 \le t \le 1, \label{eigeneqn4} \\
	x(0) = D^{\alpha} x(0) = D^{\beta}D^{\alpha} x(1) = D^{\gamma}D^{\beta}D^{\alpha} x(1) = 0, \label{eigenbc4}
	\end{gather}
where	$\alpha,\beta,\gamma,\delta\in(0,1]$, and with $\alpha+\beta\in(1,2]$, $\alpha+\beta+\gamma\in(2,3]$, and $\alpha+\beta+\gamma+\delta\in(3,4]$ if one wishes to explore this problem as a fractional order between 3 and 4. The theme throughout this work has been to approach the existence of positive solutions to \eqref{eigeneqn4}, \eqref{eigenbc4} by involving Green's function for this problem. To obtain symmetry in Green's function below we must take $\gamma=\alpha$ in the fractional differential equation, but we will maintain the more general form ($\gamma$ not necessarily equal to $\alpha$) in the proofs to follow.


\begin{theorem}[Fractional Cantilever Beam]
Let $\alpha,\beta,\gamma,\delta\in(0,1]$. The corresponding Green's function for the homogeneous problem 
\begin{equation}\label{4thfrac}
 D^{\delta}D^{\gamma}D^{\beta}D^{\alpha} x(t)=0 
\end{equation} 
satisfying boundary conditions \eqref{eigenbc4} is given by 
\begin{equation}\label{gf4}
	G(t,s)=
	\begin{cases}
		\displaystyle\frac{t^{\alpha+\beta}}{\gamma}\left[\frac{s^\gamma}{\beta(\alpha+\beta)}-\frac{t^\gamma}{(\beta+\gamma)(\alpha+\beta+\gamma)}\right] &:t\le s, \\
    \displaystyle\frac{s^{\beta+\gamma}}{\alpha}\left[\frac{t^\alpha}{\beta(\beta+\gamma)}-\frac{s^\alpha}{(\alpha+\beta)(\alpha+\beta+\gamma)}\right] &:s\le t.
	\end{cases}
\end{equation}
\end{theorem}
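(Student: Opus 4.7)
The plan is to mirror the template of the earlier theorems in the paper: fix a continuous function $h$ and show that
\[ x(t) = \int_0^1 G(t,s) h(s) s^{\delta-1} ds \]
solves the nonhomogeneous equation $D^\delta D^\gamma D^\beta D^\alpha x(t) = h(t)$ subject to the four boundary conditions of \eqref{eigenbc4}, which identifies $G$ as the Green's function for \eqref{4thfrac}. Using the branches of \eqref{gf4}, the integral splits as $x(t) = \int_0^t A(t,s) h(s) s^{\delta-1} ds + \int_t^1 B(t,s) h(s) s^{\delta-1} ds$, and $x(0) = 0$ is immediate because the factor $t^{\alpha+\beta}$ in $B(t,s)$ vanishes at $0$.

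I will then apply the rule $D^{\alpha}f = t^{1-\alpha}f'$ four times in succession, checking the relevant boundary condition at each stage. Each differentiation of the split integral produces Leibniz boundary contributions at $s = t$, but these cancel because $A(t,t) = B(t,t)$; this continuity across the diagonal is a short algebraic check resting on the identity $(\beta+\gamma)(\alpha+\beta+\gamma) - \beta(\alpha+\beta) = \gamma(\alpha+2\beta+\gamma)$ and its symmetric counterpart. After the first differentiation, $D^\alpha x(t)$ should emerge as a piecewise sum whose second branch carries a factor $t^\beta$, forcing $D^\alpha x(0) = 0$; moreover $t^{1-\alpha}\partial_t A$ and $t^{1-\alpha}\partial_t B$ again agree at $s = t$ (both equal $s^{\beta+\gamma}/[\beta(\beta+\gamma)]$ there), so the Leibniz term also vanishes at the next stage. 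I anticipate the second derivative to collapse to the single integral
\[ D^\beta D^\alpha x(t) = \frac{1}{\gamma}\int_t^1 (s^\gamma - t^\gamma) s^{\delta-1} h(s)\, ds, \]
which automatically satisfies $D^\beta D^\alpha x(1) = 0$. A further application of $D^\gamma$ reduces this to $-\int_t^1 s^{\delta-1} h(s)\, ds$, yielding $D^\gamma D^\beta D^\alpha x(1) = 0$, and a final $D^\delta$ produces $h(t)$.

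The main obstacle will be the algebraic bookkeeping needed to confirm that the Leibniz contributions at $s = t$ cancel cleanly at the first two stages (once the expression reduces to a single integral over $[t,1]$, subsequent cancellations are automatic) and that the coefficients telescope to the compact forms quoted above. The denominators $\beta(\alpha+\beta)$, $(\beta+\gamma)(\alpha+\beta+\gamma)$, $\gamma$, and $\alpha$ appearing in \eqref{gf4} are engineered precisely so that these cancellations go through; once they are verified, each of the four boundary conditions falls out at the natural stage of the computation.
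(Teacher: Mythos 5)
Your proposal follows the paper's proof essentially verbatim: fix a continuous $h$, write $x(t)=\int_0^1 G(t,s)h(s)s^{\delta-1}ds$, split the integral along the diagonal, apply $D^{\alpha}f=t^{1-\alpha}f'$ four times, and read off each boundary condition at the natural stage, with the intermediate expression $D^{\beta}D^{\alpha}x(t)=\frac{1}{\gamma}\int_1^t(t^{\gamma}-s^{\gamma})s^{\delta-1}h(s)\,ds$ agreeing with yours up to sign conventions on the limits of integration. Your explicit verification that the Leibniz terms cancel across the diagonal (via $A(t,t)=B(t,t)$ and the matching of $t^{1-\alpha}\partial_tA$ and $t^{1-\alpha}\partial_tB$ at $s=t$) is correct and, if anything, spells out a step the paper leaves implicit.
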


\begin{proof}
Let $h$ be any continuous function. We will show that
\[ x(t)=\int_{0}^{1}G(t,s)h(s)s^{\delta-1}ds, \]
for $G$ given by \eqref{gf4}, is a solution to the linear boundary value problem
\[ D^{\delta}D^{\gamma}D^{\beta}D^{\alpha} x(t) = h(t) \]
with boundary conditions \eqref{eigenbc4}. 

For any $t\in[0,1]$, using the branches of \eqref{gf4} we have
\begin{eqnarray*} 
 x(t) &=& \int_{0}^{t} \frac{s^{\beta+\gamma}}{\alpha}\left[\frac{t^\alpha}{\beta(\beta+\gamma)}-\frac{s^\alpha}{(\alpha+\beta)(\alpha+\beta+\gamma)}\right] h(s)s^{\delta-1}ds \\
  & & - \frac{t^{\alpha+\beta}}{\gamma}\int_{1}^{t} \left[\frac{s^\gamma}{\beta(\alpha+\beta)}-\frac{t^\gamma}{(\beta+\gamma)(\alpha+\beta+\gamma)}\right]h(s)s^{\delta-1}ds;
\end{eqnarray*}
clearly $x(0)=0$. Taking the $\alpha$-fractional derivative yields
\begin{eqnarray*} 
 D^{\alpha}x(t) &=& \frac{1}{\beta(\beta+\gamma)}\int_0^t s^{\beta+\gamma+\delta-1}h(s)ds-\frac{t^\beta}{\beta\gamma}\int_1^t s^{\gamma+\delta-1}h(s)ds \\
 & & +\frac{t^{\beta+\gamma}}{\gamma(\beta+\gamma)}\int_1^ts^{\delta-1}h(s)ds.
\end{eqnarray*}
It is easy to see that $D^{\alpha}x(0)=0$. Taking the $\beta$-fractional derivative of the $\alpha$-fractional derivative yields
\begin{eqnarray*} 
 D^{\beta}D^{\alpha}x(t) &=& \frac{1}{\gamma}\int_1^t\left(t^\gamma-s^\gamma\right)s^{\delta-1}h(s)ds,
\end{eqnarray*}
so that $D^{\beta}D^{\alpha}x(1)=0$. Next,
\[ D^{\gamma}D^{\beta}D^{\alpha}x(t) = \int_1^t s^{\delta-1}h(s)ds, \]
from which we have $D^{\gamma}D^{\beta}D^{\alpha}x(1)=0$, and
\[ D^{\delta}D^{\gamma}D^{\beta}D^{\alpha}x(t) = h(t). \]
This finishes the proof.
\end{proof}


\begin{theorem} 
For all $t,s\in[0,1]$, Green's function given by \eqref{gf4} satisfies 
\[ 0 \le G(t,s) \le G(1,s). \]
\end{theorem}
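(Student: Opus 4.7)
The plan is to establish the two inequalities separately. For the lower bound $G(t,s)\ge 0$, I will inspect each branch of \eqref{gf4} and show that the bracketed expression is non-negative. For the upper bound $G(t,s)\le G(1,s)$, I will show that $G(\cdot,s)$ is non-decreasing on $[0,1]$ for every fixed $s\in[0,1]$, so that the maximum occurs at $t=1$.

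For non-negativity, on the branch $t\le s$ the factor $t^{\alpha+\beta}/\gamma$ is non-negative, and since $s\ge t$ gives $s^{\gamma}\ge t^{\gamma}$ while $\beta(\alpha+\beta)<(\beta+\gamma)(\alpha+\beta+\gamma)$, the bracket
$\tfrac{s^{\gamma}}{\beta(\alpha+\beta)}-\tfrac{t^{\gamma}}{(\beta+\gamma)(\alpha+\beta+\gamma)}$
is non-negative. On the branch $s\le t$ the prefactor $s^{\beta+\gamma}/\alpha$ is non-negative, and the inequalities $t^{\alpha}\ge s^{\alpha}$ together with $\beta(\beta+\gamma)<(\alpha+\beta)(\alpha+\beta+\gamma)$ make the bracket $\tfrac{t^{\alpha}}{\beta(\beta+\gamma)}-\tfrac{s^{\alpha}}{(\alpha+\beta)(\alpha+\beta+\gamma)}$ non-negative. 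Thus $G\ge 0$ everywhere.

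For the upper bound, I will differentiate each branch in $t$ in the ordinary sense, noting that $\partial_t$ and the conformable derivative have the same sign for $t>0$. On $t\le s$ a direct calculation gives
\[
\frac{\partial G}{\partial t}(t,s)=\frac{t^{\alpha+\beta-1}}{\gamma}\left[\frac{s^{\gamma}}{\beta}-\frac{t^{\gamma}}{\beta+\gamma}\right],
\]
which is non-negative because $s^{\gamma}\ge t^{\gamma}$ and $\beta+\gamma>\beta$. On $s\le t$ the $s^{\alpha+\beta+\gamma}$ term is $t$-independent, leaving
\[
\frac{\partial G}{\partial t}(t,s)=\frac{s^{\beta+\gamma}\,t^{\alpha-1}}{\beta(\beta+\gamma)}\ge 0.
\]
Since the two branches of $G$ agree at $t=s$ (a short calculation reduces both expressions to $\tfrac{(\alpha+2\beta+\gamma)\,s^{\alpha+\beta+\gamma}}{\beta(\alpha+\beta)(\beta+\gamma)(\alpha+\beta+\gamma)}$), $G(\cdot,s)$ is continuous and monotone non-decreasing on $[0,1]$, so $G(t,s)\le G(1,s)$.

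The only subtlety I anticipate is the continuity check at $t=s$, which is needed to conclude global monotonicity from piecewise monotonicity; this is a routine algebraic identity and should present no real obstacle. Everything else reduces to sign comparisons among positive quantities built from $\alpha,\beta,\gamma$ and powers of $t,s\in[0,1]$.
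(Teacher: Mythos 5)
Your proposal is correct and follows essentially the same route as the paper: the paper also computes $\frac{\partial}{\partial t}G(t,s)$ on each branch (obtaining exactly your expressions, written as $\frac{t^{\alpha+\beta-1}}{\beta\gamma(\beta+\gamma)}\left[\gamma s^\gamma+\beta\left(s^\gamma-t^\gamma\right)\right]$ for $t\le s$ and $\frac{s^{\beta+\gamma}t^{\alpha-1}}{\beta(\beta+\gamma)}$ for $s\le t$) and concludes both bounds from monotonicity together with $G(0,s)=0$. Your separate branch-by-branch verification of $G\ge 0$ and the explicit continuity check at $t=s$ are correct but redundant extras relative to the paper's shorter argument.
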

 
\begin{proof} Note that
\begin{equation}\label{deltag} 
  \frac{d}{dt}G(t,s)=\begin{cases} 
     \frac{t^{\alpha+\beta-1}}{\beta\gamma(\beta+\gamma)}\left[\gamma  s^\gamma+\beta\left(s^\gamma-t^\gamma\right)\right] &: t \le s, \\ 
     \frac{s^{\beta+\gamma}t^{\alpha-1}}{\beta(\beta+\gamma)} &: s \le t. 
\end{cases} 
\end{equation} 
Fix $s\in[0,1]$. By the first boundary condition $G(0,s)=0$, and $\frac{d}{dt}G(t,s)$ as given above implies that $G(\cdot,s)$ is monotone increasing on $(0,1]$. In particular, $G(1,s)\ge G(t,s)\ge 0$ for all $t\in[0,1]$. 
\end{proof}

Finally, we will end the present discussion by considering another common set of boundary conditions, namely the so-called Lidstone conditions. Unlike the argument used below, one could also approach this problem as the conjunction of two conjugate problems, whose Green's function is given in \eqref{gfconj}. 


\begin{theorem}[Fractional Lidstone]
Let $\alpha,\beta\in(0,1]$. The symmetric Green's function for the homogeneous problem 
\begin{equation}\label{4thLidfrac}
 D^{\beta}D^{\alpha}D^{\beta}D^{\alpha} x(t)=0 
\end{equation} 
satisfying the Lidstone-type boundary conditions 
\begin{equation}
 x(0)=0=D^{\beta}D^{\alpha}x(0), \quad x(1)=0=D^{\beta}D^{\alpha}x(1)
\end{equation}
is given by 
\begin{equation}\label{lidgf}
	G(t,s)=
	\begin{cases}	
     u(t,s) &:t\le s, \\
     u(s,t) &:s\le t,
	\end{cases}
\end{equation}
where
\begin{equation}\label{lidu}
 u(t,s) = \frac{t^{\alpha}}{\alpha\beta(\alpha+\beta)(2\alpha+\beta)}\left[2\alpha s^\alpha\left(1-s^\beta\right)-\beta\left(1-s^\alpha\right)\left(t^{\alpha+\beta}+s^{\alpha+\beta}\right)\right]. 
\end{equation}
\end{theorem}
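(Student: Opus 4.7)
The plan is to verify the Green's function formula by direct calculation, following the scheme used in the earlier theorems. Set
\[ x(t) = \int_{0}^{t} u(s,t) h(s) s^{\beta-1} ds + \int_{t}^{1} u(t,s) h(s) s^{\beta-1} ds \]
for an arbitrary continuous $h$, using the two branches of \eqref{lidgf}. The two easy Lidstone conditions $x(0)=0$ and $x(1)=0$ are immediate: $u(0,s)$ vanishes because of the prefactor $t^\alpha$ in \eqref{lidu}, and $u(t,1)$ vanishes because the bracketed expression is zero at $s=1$.

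Next, I would apply $D^\alpha$, $D^\beta$, $D^\alpha$, $D^\beta$ in succession, at each step using $D^\rho f(t) = t^{1-\rho} f'(t)$ from \eqref{fracshort} together with the Leibniz rule for differentiating under the integral sign. At each of the first three differentiations the two boundary contributions at $s=t$ must cancel. This amounts to the statement that $G(\cdot,s)$, $D^\alpha G(\cdot,s)$, and $D^\beta D^\alpha G(\cdot,s)$ are continuous across the diagonal $t=s$, which reduces to comparing the two branches of $u$ (and their appropriate fractional derivatives) at $t=s$. The key intermediate identity I expect to recover is that after applying $D^\beta D^\alpha$ one obtains
\[ D^\beta D^\alpha x(t) = -\int_{0}^{1} G_c(t,s) h(s) s^{\beta-1} ds, \]
where $G_c$ denotes the conjugate Green's function of \eqref{gfconj}. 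This immediately yields the remaining boundary conditions $D^\beta D^\alpha x(0) = D^\beta D^\alpha x(1) = 0$, and a final application of $D^\beta D^\alpha$, together with the conjugate case of the earlier two-derivative theorem, then produces $h(t)$, as required. Symmetry $G(t,s)=G(s,t)$ is automatic from the piecewise definition.

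The principal obstacle is the algebraic verification of the intermediate identity: one must show that $-D^\beta D^\alpha u(t,s) = \tfrac{1}{\alpha} t^\alpha(1-s^\alpha)$ for $t\le s$ (and the symmetric statement for $s\le t$), which is the step where all four constants $\alpha$, $\beta$, $\alpha+\beta$, $2\alpha+\beta$ appearing in the denominator of \eqref{lidu} must telescope against the numerator. A conceptually cleaner alternative, suggested by the paragraph preceding the theorem, is to factor the fourth-order problem through the substitution $y = D^\beta D^\alpha x$ into two nested conjugate boundary value problems. That route replaces the bookkeeping above by the identity
\[ G(t,s) = \int_{0}^{1} G_c(t,\tau) G_c(\tau,s) \tau^{\beta-1} d\tau, \]
whose piecewise evaluation (splitting $\tau$ over $[0,t]$, $[t,s]$, $[s,1]$ when $t \le s$) should reproduce \eqref{lidu}. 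Either approach terminates in the same polynomial calculation, but the factored form makes the symmetry and positivity structure most transparent.
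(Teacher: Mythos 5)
Your proposal is correct, but it proves the theorem by a genuinely different route than the paper does. The paper \emph{constructs} $u$ from scratch: it writes the general homogeneous solution $u(t,s)=a(s)+b(s)t^{\alpha}+c(s)t^{\alpha+\beta}+d(s)t^{\alpha+\beta+\gamma}$, kills $a$ and $c$ with the conditions at $t=0$, builds the Cauchy function $x(t,s)$ by iterated integration, solves for $d(s)$ and $b(s)$ from the two conditions at $t=1$ imposed on $u+x$, and only at the end sets $\gamma=\alpha$ to obtain the symmetric form \eqref{lidu}. You instead \emph{verify} the stated formula, either by direct differentiation of $x(t)=\int_0^1 G(t,s)h(s)s^{\beta-1}\,ds$ or by factoring the fourth-order problem into two nested conjugate problems via $y=-D^{\beta}D^{\alpha}x$, which is exactly the alternative the paper flags in the sentence preceding the theorem but does not carry out. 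Both of your key claims check out: one computes $D^{\beta}D^{\alpha}_t u(t,s)=-\frac{1}{\alpha}t^{\alpha}(1-s^{\alpha})$ for $t\le s$ (only the $t^{2\alpha+\beta}$ term of $u$ survives two fractional differentiations) and the symmetric statement on the other branch, and one can confirm that $G(\cdot,s)$ and $D^{\alpha}G(\cdot,s)$ match across the diagonal so the Leibniz boundary terms cancel; after that, the conjugate case of the first theorem of the paper finishes the job. Your composition identity $G(t,s)=\int_0^1 G_c(t,\tau)G_c(\tau,s)\tau^{\beta-1}\,d\tau$ is also consistent with \eqref{lidu} (it reproduces the classical Lidstone kernel when $\alpha=\beta=1$). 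What each approach buys: the paper's construction explains where the coefficients in \eqref{lidu} come from and yields the more general nonsymmetric $\gamma\ne\alpha$ kernel along the way; your verification (especially the factored form) is shorter, makes the symmetry $G(t,s)=G(s,t)$ and the positivity structure transparent, and reuses the already-proved conjugate result rather than redoing the linear algebra. The only thing you should not leave implicit in a final write-up is the algebraic verification of $-D^{\beta}D^{\alpha}u=G_c$ on both branches, since that is the step where the constants $\alpha\beta(\alpha+\beta)(2\alpha+\beta)$ must cancel; it does go through.
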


\begin{proof}
In this proof we construct Green's function from scratch, modifying the classical approach, found for example in \cite[Chapter 5]{kp}.
Let $x(t,s)$ be the Cauchy function associated with \eqref{4thfrac}, namely a function satisfying
\[ x(s,s)=D^{\alpha}x(s,s)=D^{\beta}D^{\alpha}x(s,s)=0, \quad D^{\gamma}D^{\beta}D^{\alpha}x(t,s)=1. \]
(Note that we will use $\gamma$ for now, and take $\gamma=\alpha$ for symmetry purposes at the conclusion.) Using \eqref{fracshort} at each step, it is easy to verify that here the Cauchy function is given by
\begin{equation}\label{Cauchyf}
 x(t,s) = \frac{1}{\gamma}\int_s^t\int_s^\tau \left(\xi^\gamma-s^\gamma\right)\xi^{\beta-1}\tau^{\alpha-1}d\xi d\tau, 
\end{equation}
and Green's function takes the form of
\[ G(t,s) = \begin{cases}	
     u(t,s) &:t\le s, \\
     u(t,s)+x(t,s) &:s\le t,
	\end{cases} \]
where $u(t,s)$ satisfies \eqref{4thfrac} and the two boundary conditions set at $t=0$. Thus
\[ u(t,s) = a(s)+b(s)t^\alpha+c(s)t^{\alpha+\beta}+d(s)t^{\alpha+\beta+\gamma}; \]
the Lidstone boundary conditions force $a(s)=c(s)=0$. The two boundary conditions at $t=1$ are satisfied by $(u+x)$ for $x$ given in \eqref{Cauchyf}. In particular, we use 
\[ D^{\beta}D^{\alpha}[u(t,s)+x(t,s)]|_{t\rightarrow 1}=0 \]
to solve for $d$, leading to
\[ d(s)=\frac{s^\gamma-1}{\gamma(\beta+\gamma)(\alpha+\beta+\gamma)}, \]
and then $u(1,s)+x(1,s)=0$ to solve for $b$, which yields
\begin{eqnarray*}
  b(s) &=& \frac{s^{\alpha+\beta+\gamma}}{\alpha(\alpha+\beta)(\alpha+\beta+\gamma)} -\frac{s^{\beta+\gamma}}{\alpha\beta(\beta+\gamma)}\\
       & & \; + \frac{s^\gamma}{\gamma}\left(\frac{1}{\beta(\alpha+\beta)}-\frac{1}{(\beta+\gamma)(\alpha+\beta+\gamma)}\right). 
\end{eqnarray*}
Altogether we have
\begin{eqnarray*}
 u(t,s) &=& s^\gamma t^\alpha \left(\frac{s^{\alpha+\beta}}{\alpha(\alpha+\beta)(\alpha+\beta+\gamma)} +\frac{\alpha+2\beta+\gamma}{\beta(\alpha+\beta)(\beta+\gamma)(\alpha+\beta+\gamma)} \right. \\
 & & \left. -\frac{s^\beta}{\alpha\beta(\beta+\gamma)}\right) + \frac{(s^\gamma-1)t^{\alpha+\beta+\gamma}}{\gamma(\beta+\gamma)(\alpha+\beta+\gamma)}.
\end{eqnarray*}
To achieve symmetry and to satisfy \eqref{4thLidfrac}, we take $\gamma=\alpha$ and arrive at \eqref{lidu}.
\end{proof}


\begin{theorem} 
For all $t,s\in(0,1)$, Green's function given by \eqref{lidgf} satisfies 
\[ G(t,s) > 0. \]
\end{theorem}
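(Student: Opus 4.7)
The plan is to exploit the built-in symmetry $G(t,s)=G(s,t)$: since the two branches of \eqref{lidgf} are obtained from one another by swapping $t$ and $s$, it suffices to prove $u(t,s)>0$ whenever $0<t\le s<1$. The constant prefactor $t^{\alpha}/\bigl(\alpha\beta(\alpha+\beta)(2\alpha+\beta)\bigr)$ in \eqref{lidu} is strictly positive for $t>0$ and $\alpha,\beta\in(0,1]$, so the whole question reduces to showing that the bracket
\[ \Phi(t,s):=2\alpha s^{\alpha}\bigl(1-s^{\beta}\bigr)-\beta\bigl(1-s^{\alpha}\bigr)\bigl(t^{\alpha+\beta}+s^{\alpha+\beta}\bigr) \]
is strictly positive on the triangle $0<t\le s<1$.

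First I would fix $s\in(0,1)$ and note that $\partial_{t}\Phi(t,s)=-\beta(\alpha+\beta)(1-s^{\alpha})t^{\alpha+\beta-1}<0$, so $\Phi(\cdot,s)$ is strictly decreasing in $t$ on $(0,s]$. Consequently $\Phi(t,s)\ge\Phi(s,s)$ for all $t\le s$, and it is enough to establish the diagonal inequality $\Phi(s,s)>0$ for $s\in(0,1)$. A direct computation collapses the diagonal to
\[ \Phi(s,s)=2s^{\alpha}\psi(s),\qquad \psi(s):=\alpha-(\alpha+\beta)s^{\beta}+\beta s^{\alpha+\beta}. \]

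The remaining task is to prove $\psi(s)>0$ on $(0,1)$. I would check the two endpoints $\psi(0)=\alpha>0$ and $\psi(1)=\alpha-(\alpha+\beta)+\beta=0$, and then differentiate to obtain
\[ \psi'(s)=\beta(\alpha+\beta)s^{\beta-1}\bigl(s^{\alpha}-1\bigr), \]
which is strictly negative for $s\in(0,1)$ because $s^{\alpha}<1$ there. Hence $\psi$ is strictly decreasing on $(0,1)$ with $\psi(1)=0$, forcing $\psi(s)>0$ on $(0,1)$. Combining this with the monotonicity argument above gives $\Phi(t,s)\ge\Phi(s,s)=2s^{\alpha}\psi(s)>0$ on $0<t\le s<1$, and symmetry finishes the case $0<s\le t<1$.

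The only nontrivial step is recognising that the bracket is monotone in $t$ so that one need only control the diagonal; after that, the diagonal factorises cleanly and the single-variable function $\psi$ is easily handled by one differentiation. I do not anticipate any real obstacle beyond keeping track of the algebraic rearrangement that produces $\psi$, since the signs fall out from $s<1$ and $\alpha,\beta>0$.
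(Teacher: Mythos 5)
Your proposal is correct and follows essentially the same route as the paper: the paper bounds $t^{\alpha+\beta}+s^{\alpha+\beta}\le 2s^{\alpha+\beta}$ for $t\le s$ (your monotonicity-in-$t$ step in integrated form), reduces to the same one-variable function $k(s)=\beta s^{\alpha+\beta}-(\alpha+\beta)s^{\beta}+\alpha$ that you call $\psi$, and shows it is positive on $[0,1)$ by the identical derivative computation. Your write-up is, if anything, slightly more careful about strict positivity on $(0,1)\times(0,1)$.
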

 
\begin{proof} 
Clearly $u(1,s)=u(t,1)=0$, and 
\[ u(t,s) \ge \frac{2s^{\alpha}t^{\alpha}}{\alpha\beta(\alpha+\beta)(2\alpha+\beta)}\left[\alpha \left(1-s^\beta\right)-\beta\left(1-s^\alpha\right)s^{\beta}\right] \]
for $0\le t\le s$. Define
\[ k(s):=\alpha \left(1-s^\beta\right)-\beta\left(1-s^\alpha\right)s^{\beta} = \beta s^{\alpha+\beta}-(\alpha+\beta)s^\beta+\alpha. \]
Then $k(0)=\alpha>0$, $k(1)=0$, and 
\[ k'(s)=\beta(\alpha+\beta)s^{\beta-1}\left(s^{\alpha}-1\right)<0. \]
Thus, $k$ is strictly decreasing, so that $k(s)>0$ for $s\in[0,1)$, forcing $u(t,s)\ge 0$ for $t,s\in[0,1]$. Therefore the result holds.
\end{proof}


\end{document}